\theoremstyle{plain}
\newtheorem{thm}{Theorem}[section]
\newtheorem{cor}[thm]{Corollary}
\newtheorem{lem}[thm]{Lemma}
\theoremstyle{definition}
\newtheorem{defn}[thm]{Definition}
\theoremstyle{remark}
\newtheorem{rem}[thm]{Remark}
\newcommand{\nn}{\ensuremath{\mathbb{N}}}
\newcommand{\p}{\mathbb P}
\newcommand{\e}{\mathbb E}
\newcommand{\lr}{\left(}
\newcommand{\rd}{\right)}
\newcommand{\lc}{\left\{}
\newcommand{\rc}{\right\}}
\begin{document}

\title{Bounds on the Maximum Number of Minimum Dominating Sets}

\author{Samuel Connolly, Zachary Gabor, Anant Godbole, Bill Kay}

\maketitle

\centerline{
University of Pennsylvania, Haverford College,}

\centerline{ East Tennessee State University, and Emory University }

\begin{abstract}
We use probabilistic methods to find lower bounds on the maximum number, in a graph with domination number $\gamma$, of dominating sets of size $\gamma$. We find that we can randomly generate a graph that, w.h.p., is dominated by almost all sets of size $\gamma$. At the same time, we use a modified version of the adjacency matrix to obtain lower bounds on the number of sets of a given size that do not dominate a graph on $n$ vertices.\end{abstract}
\section{Introduction}
 A set $S$ of vertices in a graph $G$ is said to dominate if each vertex in $S^C$ is adjacent to at least one vertex in $S$; the minimum cardinality $\gamma=\gamma(G)$ of such an $S$ is called the domination number.  Godbole et al \cite{gjj}  provide a construction that serves as a lower bound for the number of dominating sets of size $\gamma$ in a graph with domination number $\gamma\ge3$. They find, e.g.,  that for $\gamma=3$, a graph consisting of the union of a complete graph on $\frac{n}{3}$ vertices and a complete graph minus a perfect matching on $\frac{2n}{3}$ vertices, is dominated by no pairs, but asymptotically by $\frac{4}{9}$ of all triples. Similar results are proved for all $\gamma$.  However, we find in this paper that with a probabilistic rather than a constructive approach, we can do much better.  We prove the existence of a graph $G$ with domination number $\gamma$ that, with high probability, is dominated by almost all sets of vertices of size $\gamma$ as $n\to\infty$.
\begin{rem}
For the sake of convenience, we will often refer to a fraction that is asymptotically $(1-o(1))$ of all $\gamma$-sets as being ``all but a vanishing fraction of"  $\gamma$-sets.  Moreover, it will be tacitly assumed in such situations that the domination number of the graph $G$ in question is $\gamma$.
\end{rem}
\begin{rem}
It may be noted that our probabilistic argument utilizes relatively elementary methods. This simplicity of methods is attributable to the fact that more sophisticated arguments were either superfluous or invalid. For instance, adding or deleting an edge can dramatically change the number of dominating sets of size $\gamma$ by as many as ${n-1 \choose \gamma}$ (assuming, of course that the domination number does not change), so our quantity of interest is not finitely-Lipschitz, rendering Azuma's inequality inapplicable.
\end{rem}  

However, by dint of a result obtaining general lower bounds on the number of sets of a given size that do not dominate a graph on $n$ vertices, we find that a graph on $n$ vertices with domination number $\gamma\ge3$ is \emph{not} dominated by at least $O(n^{\gamma - 1 - \frac{1}{\gamma -1}})$ of the sets of size $\gamma$; when $\gamma=2$ it is seen in \cite{gjj} that each pair of vertices may dominate.

The question we address is only peripherally related to the maximum number of edges in a graph with given domination number \cite{vizing},\cite{sanchis},\cite{goddard}:  Even though the two answers do coincide when $\gamma=2$, in general they are not the same.  For example when $\gamma=3$, there are only $O(n^2)$ dominating sets of size 3 when the maximum number of edges are inserted; these triples are all nullified when an edge is added, causing the domination number to become 2.  Similarly, if the graph property we were investigating were ``maximum number of triangles in a graph with clique number 3," then Tur\'an's theorem tells us that we can have no more than $\sim n^2/3$ edges, which yields $\sim n^3/27$ triangles.  Hypergraph Tur\'an theory \cite{keevash}, on the other hand, tells us that a maximum of between 5/9 and $\sim0.562$ of all possible 3-edges are present in a 3-uniform hypergraph on $n$ vertices that has no tetrahedron; the correct fraction is conjectured to be 5/9.  Either of these bounds  gives us more triangles than in the extremal edge-Tur\'an case.  And of course there are fewer edges than in the edge-extreme case, if we consider the structure to be a graph, with each 3-edge contributing three 2-edges, and avoiding double counting.  Overall,  the message seems to be that problems of this type can be quite complicated.
\section{Lower Bounds on the Number of Sets that do not Dominate}

The domination number of a graph $G=(V,E)$ will be denoted by $\gamma=\gamma(G)$. Although it is often easy for small graphs to tell by inspection which sets dominate and which do not, doing so becomes difficult quickly as graphs get larger. Therefore, it will be useful to have a tool that we can use to organize a graph so that we can more easily identify and count dominating and non-dominating sets. Hence, the $s$-adjacency matrix, defined as follows:
\begin{defn}
Given a graph $G$ on $n$ vertices, an $n\, \times\, n$ matrix $\mathcal{D}_G=(d_{ij})_{i,j=1}^n$ is an \emph{$s$-adjacency matrix} of $G$ if there exists some labeling of the vertices of $G$ with the integers $1$ through $n$ such that 
$$d_{ij} = \begin{cases}
1 & i=j \text{ or vertices $i$ and $j$ are adjacent}\\
0 & \text{otherwise}
\end{cases}$$
\end{defn}
Clearly, the $s$-adjacency matrix of a graph on $n$ vertices is a symmetric $n\, \times\, n$ 0\textendash 1 matrix with ones on the diagonal, equals $A+I_n$, where $A$ is the adjacency matrix, and there is a bijection between such matrices and the set of all simple labeled graphs on $n$ vertices.
Note also that a set of vertices $a_1, a_2, \dots a_k$ in a graph $G$ is a dominating set if and only if there are no rows in $\mathcal{D}_G$ with zeroes in the $a_i$th column for all $1 \leq i \leq k$.

We now use this tool to prove our first result.

\begin{thm}
Given a graph $G$ on $n$ vertices, let $m,\gamma(G)>b\ge2$.  Then  $G$ must contain at least $g \sim \frac{n^{{(b-1)m}/{b}}}{m!}$ non-dominating sets of $m$ vertices.
\end{thm}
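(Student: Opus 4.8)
The plan is to read the estimate off the $s$-adjacency matrix $\mathcal{D}_G=(d_{ij})$ directly, with a single pigeonhole step and no probabilistic input. For each row index $i$ let $Z_i=\{\,j:d_{ij}=0\,\}$ be the set of columns in which row $i$ vanishes, and note $i\notin Z_i$ because $d_{ii}=1$. By the characterization recorded just before the theorem, a vertex set $S$ fails to dominate $G$ exactly when $S\subseteq Z_i$ for some $i$; hence every $m$-subset of every $Z_i$ is a non-dominating $m$-set, distinct subsets giving distinct sets. So it suffices to exhibit a single row whose zero set $Z_{i_0}$ has size at least about $n^{(b-1)/b}$, and then count its $m$-subsets.

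To locate such a row I would invoke $\gamma(G)>b$: no $b$ vertices dominate $G$, so every $b$-subset $T\subseteq V$ is contained in some $Z_i$. Therefore $\binom{V}{b}\subseteq\bigcup_{i=1}^{n}\binom{Z_i}{b}$, and counting gives $\sum_{i=1}^{n}\binom{|Z_i|}{b}\ge\binom{n}{b}$. Pigeonhole produces a row $i_0$ with $\binom{|Z_{i_0}|}{b}\ge\frac1n\binom{n}{b}\sim\frac{n^{b-1}}{b!}$, and the crude inequality $\binom{z}{b}\le z^{b}/b!$ then forces $|Z_{i_0}|\ge n^{(b-1)/b}(1-o(1))$. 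Since $b\ge2$ this tends to infinity, so $|Z_{i_0}|\ge m$ once $n$ is large.

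The last step is to count: the number of non-dominating $m$-sets is at least
\[
\binom{|Z_{i_0}|}{m}\ \ge\ \frac{(|Z_{i_0}|-m+1)^{m}}{m!}\ \sim\ \frac{|Z_{i_0}|^{m}}{m!}\ \ge\ \frac{n^{(b-1)m/b}}{m!}\,(1-o(1)),
\]
using that $m$ is fixed. Taking $g=\binom{|Z_{i_0}|}{m}$ (or simply $g$ equal to the last displayed quantity) gives the claim.

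I expect the only real care needed to be the bookkeeping of the $o(1)$ terms along the chain — pigeonhole, then the lower bound on $|Z_{i_0}|$, then the lower bound on $\binom{|Z_{i_0}|}{m}$ — so that the constant $\frac1{m!}$ survives unchanged; since the target carries exactly that constant, the elementary bounds $\binom{z}{b}\le z^{b}/b!$ and $\binom{z}{m}\ge(z-m+1)^{m}/m!$ suffice and nothing sharper (e.g.\ convexity across the $n$ rows) is required. It is also worth noting that the proof uses $\gamma(G)>b$ crucially but not $m>b$: the latter only keeps the bound interesting, since when $m<\gamma$ all $\binom{n}{m}$ sets of size $m$ already fail to dominate.
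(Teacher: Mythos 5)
Your argument is correct, and its skeleton is the same as the paper's: use the fact that $\gamma(G)>b$ forces every $b$-subset of $V$ to sit inside the zero set of some row of $\mathcal{D}_G$, deduce by counting that some single row has roughly $n^{(b-1)/b}$ zeros, and then take all $m$-subsets of that row's zero set. Where you genuinely diverge is in the quantitative middle step. The paper gets the bound $a+1\ge n^{(b-1)/b}$ by choosing $a$ to be the largest integer with $\binom{n}{b}>n\binom{a}{b}$ and then invoking its Lemma 2.3, a monotonicity statement about the ratio $\binom{a}{b}\big/\frac{a^b}{b!}$ that requires $a$ to exceed an unspecified threshold $a_0$ and occupies most of the proof. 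You bypass all of that: the pigeonhole inequality $\binom{|Z_{i_0}|}{b}\ge\frac1n\binom{n}{b}=\frac{(n-1)\cdots(n-b+1)}{b!}$ combined with the one-line bound $\binom{z}{b}\le z^b/b!$ gives $|Z_{i_0}|^b\ge(n-1)\cdots(n-b+1)\sim n^{b-1}$ directly, with a clean $(1-o(1))$ that survives into the final count because $m$ is fixed. This is a real simplification -- it eliminates the paper's only lemma and the somewhat delicate ``largest $a$ satisfying the inequality'' maneuver -- while proving exactly the same asymptotic $g\sim n^{(b-1)m/b}/m!$. Your closing observation that the hypothesis $m>b$ is never used (only $\gamma(G)>b$ matters, with $m>b$ serving merely to keep the count below the trivial $\binom{n}{m}$) is also accurate and worth recording.
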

\begin{proof}
Take any $G$ with $\gamma(G)>b$ and $a\ge b$. Because no set of size $b$ dominates $G$, $\mathcal{D}_G$ must contain at least ${n \choose b}$ different ways of picking $b$ different $0$s in a single row. If there are at most $a$ $0$s in any row of $\mathcal{D}_G$, however, then there can be at most ${a \choose b}$ different ways of picking $b$ $0$s in any one row. Thus any one row gives rise to at most ${a\choose b}$ non-dominating sets of size $b$, and thus there are at most $n\cdot{a\choose b}$ non-dominating sets corresponding to the zeros in  the rows.  Hence, with $n$ sufficiently large so that 
\begin{equation} \label{eq:1}
 {n \choose b} >n{a \choose b}
\end{equation} holds, $\mathcal{D}_G$ cannot have at most $a$ zeros in each row, and thus must have a row with at least $a+1$ $0$s, and so $G$ has at least ${{a +1} \choose {m}}$ sets of $m$ vertices that do not dominate.   Before continuing with the proof, we examine the ramifications of the simple inequality (1) (which is impossible to satisfy for $b=1$) and of its reverse.
\begin{lem}
Given any $b$, for sufficiently large $a$, $n{a \choose b} \geq {n \choose b} \Rightarrow n \frac{a^b}{b!} \geq \frac{n^b}{b!}$.
\end{lem}
\begin{proof}
If $a \geq n$, then the result is trivial. Otherwise, given some fixed $b$, consider the ratio
$
{{a \choose b}}/{\frac{a^b}{b!}}.$

This expression simplifies to $$1-c_1a^{-1}+c_2a^{-2}-c_3a^{-3}+ \dots \pm c_{b-1}a^{-(b-1)}.$$
where for $1\le i\le b-1$, $c_i$ represent the absolute values of the coefficients of $a^{b-i}$  in the expansion of 
$${a \choose b}{b!}=\Pi_{i=0}^{b-1}(a-i).$$
Note that for a fixed $b$, these coefficients do not vary with $a$. Since $
{{a \choose b}}/{\frac{a^b}{b!}}$ is positive and bounded above by $1$, by the above it is of the form $1-\epsilon(a)$ with $$0<\epsilon(a)=c_1a^{-1}-c_2a^{-2}+c_3a^{-3}+ \dots \mp c_{b-1}a^{-(b-1)}<1.$$
Differentiating, we see that for sufficiently large $a$, the first term of $\epsilon'(a)$ dominates, and so $\epsilon(a)$ is decreasing for $a$ sufficiently large, and hence $
{{a \choose b}}/{\frac{a^b}{b!}}$ is monotone increasing if $a\ge a_0$. Therefore, for $n>a\ge a_0; n{a \choose b}\ge{n\choose b}$ implies that $(1-\epsilon(a))n\frac{a^b}{b!} \geq (1-\epsilon(n))(\frac{n^b}{b!})$ with $\epsilon(n)<\epsilon(a)$, so $$n\frac{a^b}{b!}\geq \frac{1-\epsilon(n)}{1-\epsilon(a)}\cdot\frac{n^b}{b!}>\frac{n^b}{b!}.$$  This proves the lemma.\hfill
\end{proof}
Now fix a small $b$ and select $a'$ sufficiently large so as to satisfy the hypothesis of Lemma 2.3, i.e. if $n{a\choose b}$ were to be larger than ${n\choose b}$, then we would have $n{a^b}/{b!} \geq {n^b}/{b!}$  Next choose $n$ sufficiently large  so as to satisfy ``the opposite inequality" (1) for that value of $a'$, and let $a\geq a'$ be the largest integer satisfying (1) for this value of $n$. Then there must be a row of $\mathcal{D}_G$ with $a + 1$ $0$s, but we also have $\frac{n^{b}}{b!} \leq n \frac{(a+1)^{b}}{b!}$, and hence $a + 1 \geq n^\frac{b-1}{b}$. Therefore, $G$ must have at least $g={n^\frac{b-1}{b} \choose m}\sim \frac{n^\frac{(b-1)m}{b}}{m!}$ $m$-sets that do not dominate.  This proves Theorem 2.2.
\end{proof}
The following corollary follows directly from substituting $m=\gamma$ and $b=\gamma-1$:
\begin{cor}
For $m=\gamma$, $b=\gamma-1$, $g \sim \frac{n^{\gamma-1 -\frac{1}{\gamma-1}}}{\gamma!}$, i.e., in any graph with domination number $\gamma$, there are at least $\Omega\lr {n^{\gamma-1 -({1}/{(\gamma-1)})}}\rd$ non-dominating sets of size $\gamma$.
\end{cor}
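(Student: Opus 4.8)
The plan is to obtain Corollary 2.4 as a direct specialization of Theorem 2.2, so the work reduces to two routine tasks: checking that the hypotheses are met and simplifying the exponent. First I would verify that the choices $m=\gamma$ and $b=\gamma-1$ satisfy the standing assumption $m,\gamma(G)>b\ge 2$ of Theorem 2.2. The inequality $m>b$ is just $\gamma>\gamma-1$, and $\gamma(G)=\gamma>\gamma-1=b$ holds since the domination number of $G$ is $\gamma$; the requirement $b\ge 2$ becomes $\gamma-1\ge 2$, i.e. $\gamma\ge 3$, which is precisely the restriction announced in the introduction for this bound (recall that for $\gamma=2$ every pair may dominate, by \cite{gjj}, so no such bound can hold there).

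Second, I would substitute directly into the conclusion of Theorem 2.2. That theorem guarantees at least
$$g \sim \frac{n^{(b-1)m/b}}{m!} = \frac{n^{(\gamma-2)\gamma/(\gamma-1)}}{\gamma!}$$
non-dominating $\gamma$-sets, the asymptotic coming from ${n^{(b-1)/b} \choose m}\sim n^{(b-1)m/b}/m!$ with $m=\gamma$ fixed. It then remains only to record the elementary identity
$$\frac{(\gamma-2)\gamma}{\gamma-1} = \frac{\gamma^2-2\gamma}{\gamma-1} = \frac{(\gamma-1)^2-1}{\gamma-1} = (\gamma-1) - \frac{1}{\gamma-1},$$
which rewrites the exponent in the advertised form and yields $g\sim n^{\gamma-1-1/(\gamma-1)}/\gamma!$; since $\gamma$ is a fixed constant the factor $1/\gamma!$ is absorbed into the asymptotic notation, giving the claimed $\Omega\lr n^{\gamma-1-1/(\gamma-1)}\rd$ lower bound.

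There is no genuine obstacle here — the corollary is purely a substitution into Theorem 2.2. If anything requires a moment's attention, it is the hypothesis check, namely ensuring the case $\gamma\ge 3$ is in force so that $b=\gamma-1\ge 2$ and Theorem 2.2 (together with the preceding Lemma 2.3, whose hypothesis is about choosing $a$ large for the fixed value of $b$) actually applies; and the bookkeeping that the collapse of the exponent $(b-1)m/b$ to $\gamma-1-1/(\gamma-1)$ is exact. With those observations in place the statement follows immediately.
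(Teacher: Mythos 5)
Your proposal is correct and matches the paper exactly: the paper derives Corollary 2.4 in one line by substituting $m=\gamma$, $b=\gamma-1$ into Theorem 2.2, and your exponent computation $(\gamma-2)\gamma/(\gamma-1)=\gamma-1-1/(\gamma-1)$ together with the hypothesis check $b\ge 2\iff\gamma\ge 3$ is precisely the routine verification that substitution requires.
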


\section{Domination by a large fraction of $\gamma$-sets}

As we have seen, a graph with domination number $\gamma$ is \emph{not} dominated by a relatively large number of sets of a size $\gamma$. However, the lower bound on non-dominating sets represents a small fraction of the total number of sets of size $\gamma$, and, as we will see in this section, we can randomly construct a graph that has high probability of having domination number $\gamma$ and of being dominated by all but an arbitrarily small fraction of its $\gamma$-sets.
\begin{thm}
 Given any $\gamma \in \nn$, there exists a sequence $\epsilon_{\gamma, n}$, such that, w.h.p., the Erd\H os-R\'enyi random graph $G(n,1-\epsilon_{\gamma,n})$  has domination number $\gamma$ and is dominated by all but a vanishing fraction of the  $\gamma$-sets ($n\to\infty$).
\end{thm}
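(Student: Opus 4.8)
The plan is to realize the theorem with a dense Erd\H os--R\'enyi graph $G=G(n,1-\epsilon)$, where $\epsilon=\epsilon_{\gamma,n}\to 0$, and to isolate the two conditions that $\epsilon$ must meet. Write $N_{\gamma-1}$ for the number of $(\gamma-1)$-subsets of $V(G)$ that dominate $G$, and $\overline{N}_{\gamma}$ for the number of $\gamma$-subsets that fail to dominate $G$. The key reduction is that the whole statement follows from (i) $N_{\gamma-1}=0$ w.h.p.\ and (ii) $\overline{N}_{\gamma}=o\!\left(\binom{n}{\gamma}\right)$ w.h.p.: indeed (i) forces $\gamma(G)\ge\gamma$ (if no $(\gamma-1)$-set dominates, no smaller set does either), while (ii) gives $\overline{N}_{\gamma}<\binom{n}{\gamma}$, so some $\gamma$-set dominates and $\gamma(G)\le\gamma$, and at the same time it says the dominating $\gamma$-sets form a $(1-o(1))$-fraction. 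So the task reduces to choosing $\epsilon$ with $\mathbb{E}[N_{\gamma-1}]\to 0$ and $\mathbb{E}[\overline{N}_{\gamma}]/\binom{n}{\gamma}\to 0$, followed by Markov's inequality.

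The two first-moment estimates are elementary. Fix a $(\gamma-1)$-set $S$; a vertex $v\notin S$ is undominated by $S$ precisely when all $\gamma-1$ possible edges from $v$ to $S$ are missing, an event of probability $\epsilon^{\gamma-1}$, and over the $n-\gamma+1$ choices of $v$ these events are independent, so $\mathbb{P}(S\text{ dominates})=(1-\epsilon^{\gamma-1})^{n-\gamma+1}$ and
$$\mathbb{E}[N_{\gamma-1}]=\binom{n}{\gamma-1}(1-\epsilon^{\gamma-1})^{n-\gamma+1}\le \frac{n^{\gamma-1}}{(\gamma-1)!}\,e^{-(n-\gamma+1)\epsilon^{\gamma-1}},$$
which tends to $0$ as soon as $\epsilon^{\gamma-1}$ is a little larger than $\frac{(\gamma-1)\ln n}{n}$. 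Dually, for a fixed $\gamma$-set $S$ a union bound over $v\notin S$ gives $\mathbb{P}(S\text{ fails to dominate})\le (n-\gamma)\epsilon^{\gamma}$, hence $\mathbb{E}[\overline{N}_{\gamma}]\le \binom{n}{\gamma}(n-\gamma)\epsilon^{\gamma}$, and $\mathbb{E}[\overline{N}_{\gamma}]/\binom{n}{\gamma}\to0$ whenever $\epsilon=o(n^{-1/\gamma})$.

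The crux is that these two requirements are compatible, i.e.\ that one needs $\left(\frac{(\gamma-1)\ln n}{n}\right)^{1/(\gamma-1)}\lesssim \epsilon\lesssim n^{-1/\gamma}$, and this window is nonempty: since $\frac{1}{\gamma-1}>\frac{1}{\gamma}$ for $\gamma\ge 2$, the lower threshold is $n^{-1/(\gamma-1)+o(1)}=o(n^{-1/\gamma})$, so the interval is in fact polynomially wide for all large $n$. I would take concretely
$$\epsilon_{\gamma,n}=\left(\frac{\gamma\ln n}{n}\right)^{1/(\gamma-1)},$$
where using the constant $\gamma$ rather than $\gamma-1$ buys a safety margin: $(n-\gamma+1)\epsilon_{\gamma,n}^{\gamma-1}\sim\gamma\ln n$ yields $\mathbb{E}[N_{\gamma-1}]\sim\frac{1}{(\gamma-1)!\,n}\to0$, while $n\,\epsilon_{\gamma,n}^{\gamma}=(\gamma\ln n)^{\gamma/(\gamma-1)}n^{-1/(\gamma-1)}\to0$ yields $\mathbb{E}[\overline{N}_{\gamma}]/\binom{n}{\gamma}\to0$. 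Then Markov finishes it: $\mathbb{P}(N_{\gamma-1}\ge1)\le\mathbb{E}[N_{\gamma-1}]\to0$, and with $\delta_n=\sqrt{\mathbb{E}[\overline{N}_{\gamma}]/\binom{n}{\gamma}}\to0$ one gets $\mathbb{P}\!\left(\overline{N}_{\gamma}\ge\delta_n\binom{n}{\gamma}\right)\le\delta_n\to0$; intersecting the two high-probability events gives, w.h.p., both $\gamma(G)=\gamma$ and a $(1-\delta_n)=(1-o(1))$ fraction of dominating $\gamma$-sets. I expect the only real obstacle to be the bookkeeping that establishes the nonempty window (lower threshold $=o(\text{upper threshold})$) and verifies that a single clean $\epsilon_{\gamma,n}$ sits inside it; everything else is routine first moment, and no second-moment argument is needed because we only ever want one-sided bounds. (The case $\gamma=1$ is degenerate — the exponent $1/(\gamma-1)$ is undefined — but there one takes $\epsilon=n^{-2}$, forcing $G$ to be complete up to $o(n)$ missing edges, so $\gamma(G)=1$ and almost every vertex is universal.)
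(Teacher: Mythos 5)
Your proposal is correct and follows essentially the same route as the paper: a dense random graph $G(n,1-\epsilon_{\gamma,n})$ with $\epsilon_{\gamma,n}$ chosen in the (nonempty) window between $\bigl(\Theta(\log n)/n\bigr)^{1/(\gamma-1)}$ and $o(n^{-1/\gamma})$, with a first-moment/Markov bound killing the $(\gamma-1)$-sets and a first-moment bound showing almost all $\gamma$-sets dominate. Your step of applying Markov directly to the count of non-dominating $\gamma$-sets is a clean equivalent of the paper's reverse-Markov contradiction argument for $X_\gamma$; the paper's additional variance/Chebyshev computation is not needed for the theorem itself and serves only to produce the quantitative error term $O(\log^\gamma n\cdot n^{\gamma-1/(\gamma-1)})$ used in Corollary 3.2.
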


\begin{proof}
Let $X_{\gamma-1}$ be the number of dominating sets of size one less than the putative domination number.  By Markov's inequality and linearity of expectation,
\begin{eqnarray}\p(X_{\gamma-1}\ge1)&\le&\e(X_{\gamma-1})
\le{{n}\choose{\gamma-1}}(1-\epsilon_{\gamma,n}^{\gamma-1})^{n-\gamma+1}\nonumber\\
&\leq& \frac{n^{\gamma-1}}{(\gamma-1)!}\exp\{-(n-\gamma+1)\epsilon_{\gamma,n}^{\gamma-1}\}\to0
\end{eqnarray}
provided $\epsilon_{\gamma,n}\ge\sqrt[\gamma-1]{\frac{(\gamma-1+\delta)\log n}{n}}$, where $\delta>0$ is arbitrary.  So for such $\epsilon_{\gamma,n}$, there will with high probability exist no dominating sets of size $\gamma-1$.  By a similar argument, and using the inequality $1-u\ge\exp\{-u/(1-u)\}$, we see that \[\e(X_\gamma)={n \choose \gamma}(1-\epsilon_{\gamma,n}^\gamma)^{n-\gamma}\ge{n \choose \gamma}\exp\lc-\frac{(n-\gamma)\epsilon_{\gamma,n}^\gamma}{1-\epsilon_{\gamma,n}^\gamma}\rc.\]
As $n\to\infty$, $\e(X_\gamma)/{n\choose \gamma}\to1$ provided $\epsilon_{\gamma,n}\ll\frac{1}{\sqrt[\gamma]{n}}$. For such $\epsilon_{\gamma,n}$, the expected number of dominating sets of size $\gamma$ is asymptotic to the total number of sets of size $\gamma$.
The two ranges of desired values for $\epsilon_{\gamma,n}$ are not disjoint; for example, $\epsilon_{\gamma,n}=\frac{\log n}{\sqrt[\gamma-1]{n}}$ lies in their intersection. For such values of $\epsilon_{\gamma,n}$, we almost surely have no dominating sets of size $\gamma-1$, and the expected number of sets of size $\gamma$ is all but a vanishing fraction of all of the sets of size $\gamma$. Assume next that $\e(X_\gamma)={n\choose\gamma}(1-o(1))$ but that $X_\gamma\le q{n\choose\gamma}$ with probability $p>0$ for some $q<1$.  Then $\e(X_\gamma)\le pq{n\choose\gamma}+(1-p){n\choose\gamma}=(1-p+pq){n\choose\gamma}\ne{n\choose\gamma}(1+o(1))$, a contradiction.
Thus, for $n$ large, it is impossible for the value of $X_\gamma$ to fall further than a nonvanishing fraction below its expected value. And so this process, with high probability, constructs a graph of domination number $\gamma$ in which all but a vanishing fraction of sets of size $\gamma$ dominate.  However, we want to quantify the above behavior in terms of a concentration result that mirrors Corollary 2.4.  Accordingly, we compute the variance of $X_\gamma$ and showing that it grows more slowly than does $E^2X_\gamma$.   We have
$$X_\gamma=\displaystyle\sum_{i=1}^{{n \choose \gamma}} I_i,$$
where $I_i$ equals 1 if the $i$th set of $\gamma$  vertices (also denoted by $i$) is a dominating set and 0 otherwise.   Thus
$$E(X_\gamma^2)=\displaystyle\sum_{r=0}^{\gamma} \displaystyle\sum_{|i\cap j|=r}^{} E(I_i I_j),$$
since the probability that two $\gamma$-sets both dominate depends only on how many points they share. 
Consider first the $r=0$ term, 
which is the product of three terms: the number of pairs of $\gamma$-sets with empty intersection; the probability that two non-overlapping $\gamma$-sets dominate each other; and the probability that two non-overlapping $\gamma$-sets both dominate the other $n-2\gamma$ points.
The number of such pairs equals
${n \choose 2\gamma}{{2\gamma \choose \gamma}}/{2}=\frac{n^{2\gamma}}{2(\gamma !)^2}+O(n^{2\gamma-1}).$
The probability that two such sets dominate each other is found by performing inclusion-exclusion on the events that a specific point is not dominated by the other set. Each of the $2\gamma$ points involved has a $\epsilon_{\gamma,n}^\gamma$ probability of not being dominated, as all $\gamma$ potential edges between it and the other set must be missing. The events where we require multiple points to not be dominated are less likely as more and more edges need to be missing. The highest order terms here are:
$$1-2\gamma \epsilon_{\gamma,n}^\gamma +\gamma^2 \epsilon_{\gamma,n}^{2\gamma-1} +(\gamma^2-\gamma)\epsilon_{\gamma,n}^{2\gamma}$$
Since $\epsilon_{\gamma,n}\ll\frac{1}{\sqrt[\gamma]{n}}$, the largest term above, other than 1, is $o({1}/{n})$. (Incidentally, the separate $\epsilon_{\gamma,n}^{2\gamma-1}$ and $\epsilon_{\gamma,n}^{2\gamma}$ terms appear because the probability that two points are both not dominated by the other set depends on whether or not they are members of the same set.)

The third factor, the probability that both of our $\gamma$-sets dominate the remaining $n-2\gamma$ points, equals $(1-\epsilon_{\gamma,n}^\gamma)^{2n-4\gamma}=1-o(1)$.  Thus the net contribution of the $r=0$ case to the second moment is
$$\lr\frac{n^{2\gamma}}{2(\gamma !)^2}+O(n^{2\gamma-1})\rd(1-2\gamma \epsilon_{\gamma,n}^\gamma  + O(\epsilon_{\gamma,n}^{2\gamma-1}))(1-(2n-4\gamma)\epsilon_{\gamma,n}^\gamma+O(n^2\epsilon_{\gamma,n}^{2\gamma})),$$  which simplifies to $$\frac{n^{2\gamma}}{2(\gamma !)^2}(1-\epsilon_{\gamma,n}^\gamma)^{2n-6\gamma}(1+o(1))=\frac{n^{2\gamma}}{2(\gamma !)^2}(1-\epsilon_{\gamma,n}^\gamma)^{2n}(1+o(1)).$$
As the $r\geq 1$ cases yield no terms of higher order than $n^{2\gamma-1}(1-\epsilon_{\gamma,n}^\gamma)^{2n}(1+o(1))$, the above are  the only high-order terms of $E(X_\gamma^2)$.

Note next that \begin{eqnarray*}(EX_\gamma)^2&=&\lr{n \choose \gamma}(1-\epsilon_{\gamma,n}^\gamma)^{n-\gamma})\rd^2\\&=&\frac{n^{2\gamma}}{2(\gamma !)^2}(1-\epsilon_{\gamma,n}^\gamma)^{2n-2\gamma}+O(n^{2\gamma-1})\\&=&\frac{n^{2\gamma}}{2(\gamma !)^2}(1-\epsilon_{\gamma,n}^\gamma)^{2n}(1+o(1)).\end{eqnarray*} 
Consequently,
\[{\rm Var}(X_\gamma)=E(X_\gamma^2)-\e^2(X_\gamma)=O(n^{2\gamma-1}),\]
and by Chebychev's inequality, for any $\phi(n)\to\infty; \phi(n)=o(n^{1/2})$,
\[\p(|X_\gamma-\e(X_\gamma)|\ge\phi(n)n^{\gamma-1/2})\le\frac{A}{\phi^2(n)}\]
for aome $A>0$.  It follows, taking $\phi(n)=\log n$ and $\epsilon_{\gamma,n}=\log n/n^{1/(\gamma-1)}$,  that w.h.p. for $\gamma\ge3$, \begin{eqnarray*}X_\gamma&\ge&\e(X_\gamma)-\frac{\log n}{\sqrt n}n^\gamma\\&=&{n\choose\gamma}-{n\choose\gamma}\frac{\log^\gamma n}{n^{1/(\gamma-1)}}-\frac{\log n}{\sqrt n}n^\gamma\\&=&{n\choose\gamma}-O(\log^\gamma n\cdot n^{\gamma-\frac{1}{(\gamma-1)}}).\end{eqnarray*} This completes the proof.\hfill\end{proof}
Together with Corollary 2.4, we have also proved 
\begin{cor}  The maximum number $M=M_{n,\gamma}$ of dominating sets of size $\gamma$ in a graph $G$ on $n$ vertices and with domination number $\gamma\ge3$ satisfies $${n\choose\gamma}-O(\log^\gamma n\cdot n^{\gamma-\frac{1}{(\gamma-1)}})\le M\le {n\choose \gamma}-\Omega( {n^{\gamma-1 -({1}/{(\gamma-1)})}}).$$
\end{cor}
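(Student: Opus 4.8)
The plan is to assemble the two halves of the displayed inequality from results already in hand, so that essentially no new argument is needed. The upper bound is nothing more than a restatement of Corollary 2.4: if $G$ is \emph{any} graph on $n$ vertices with $\gamma(G)=\gamma\ge3$, then the number of $\gamma$-subsets of $V(G)$ that dominate equals ${n\choose\gamma}$ minus the number that fail to dominate, and Corollary 2.4 bounds the latter below by $\Omega(n^{\gamma-1-1/(\gamma-1)})$. Since this holds for every such $G$, it holds for the extremal graph, and hence $M_{n,\gamma}\le{n\choose\gamma}-\Omega(n^{\gamma-1-1/(\gamma-1)})$.

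For the lower bound I would invoke Theorem 3.2, using the explicit choice $\epsilon_{\gamma,n}=\log n/n^{1/(\gamma-1)}$ that appears at the end of its proof. That proof furnishes three events, each of probability tending to $1$: that $G(n,1-\epsilon_{\gamma,n})$ has no dominating set of size $\gamma-1$; that it does have dominating sets of size $\gamma$ (so its domination number is exactly $\gamma$); and, via Chebyshev's inequality, that $X_\gamma\ge{n\choose\gamma}-O(\log^\gamma n\cdot n^{\gamma-1/(\gamma-1)})$. A union bound over the three complementary events shows that all three hold simultaneously with probability tending to $1$, hence with positive probability once $n$ is large. Consequently there exists a graph $G$ on $n$ vertices with $\gamma(G)=\gamma$ and at least ${n\choose\gamma}-O(\log^\gamma n\cdot n^{\gamma-1/(\gamma-1)})$ dominating $\gamma$-sets, and this one graph witnesses $M_{n,\gamma}\ge{n\choose\gamma}-O(\log^\gamma n\cdot n^{\gamma-1/(\gamma-1)})$.

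The only point deserving a line of care is checking that the error term in the statement is indeed the dominant one among the asymptotic contributions produced in the proof of Theorem 3.2. The Chebyshev deviation term is of order $\log n\cdot n^{\gamma-1/2}$, while the expectation deficit ${n\choose\gamma}\log^\gamma n/n^{1/(\gamma-1)}$ is of order $\log^\gamma n\cdot n^{\gamma-1/(\gamma-1)}$; since $\gamma\ge3$ forces $1/(\gamma-1)\le1/2$, the second exponent is at least as large as the first, so both contributions are absorbed into $O(\log^\gamma n\cdot n^{\gamma-1/(\gamma-1)})$, exactly as stated. Beyond this bookkeeping there is no obstacle: all of the substance lives in Theorem 3.2 and Corollary 2.4, and the present corollary merely records that the extremal count $M_{n,\gamma}$ is squeezed between the general upper bound and the probabilistically constructed lower bound.
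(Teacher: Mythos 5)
Your proposal is correct and matches the paper's intent exactly: the paper derives this corollary with no further argument, simply combining the probabilistic lower bound established at the end of the proof of the theorem in Section 3 (with $\epsilon_{\gamma,n}=\log n/n^{1/(\gamma-1)}$) with the upper bound from Corollary 2.4. Your added bookkeeping — the union bound over the three high-probability events and the check that the expectation deficit dominates the Chebyshev deviation term for $\gamma\ge3$ — is a correct and welcome elaboration of details the paper leaves implicit.
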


\section{Open Questions}
\label{section5}
First, we believe that the bounds in our results can be tightened, particularly by increasing the lower bound on number of non-dominating sets. This intuition arises from the fact that the lower bound is arrived at by considering the number of non-dominating sets, the existence of which are implied by the 0s in a single row of $\mathcal{D}_G$.  It is likely, however, that there are many more non-overlapping $\gamma$-sets of 0s among the $n-1$ other rows.  Second, as asked by Godbole et al \cite{gjj}, can we do better than in that paper by constructive means?

\medskip

\noindent {\bf Acnowledgments}  The research of all four authors was supported by NSF Grant 1263009.


\end{document}